\documentclass[10pt]{amsart}
\linespread{1.0}
\usepackage{latexsym}
\usepackage{amsfonts,amsmath,amssymb,indentfirst}
\usepackage[pdftex]{hyperref}
\newcommand{\beq}{\begin{equation}}
\newcommand{\eeq}{\end{equation}}
\newcommand{\bdism}{\begin{displaymath}}
\newcommand{\edism}{\end{displaymath}}

\newcommand{\setQ}{\mathbb Q}
\newcommand{\setR}{\mathbb R}
\newcommand{\setC}{\mathbb C}

\newtheorem{theorem}{Theorem}[section]
\newtheorem{proposition}[theorem]{Proposition}

\newtheorem{lemma}[theorem]{Lemma}

\newtheorem{definition}{Definition}

%\newenvironment{proof}
%   {{\bf \em Proof. }}
%   {\hspace*{\fill}\nolinebreak[1]\hspace*{\fill}\mbox{$\Box$}\\ \\}
\thanks{* Department of Mathematics, Duke University, Durham NC 27708-0320, USA \\
Phone: (919) 660-2850, Fax: (919) 660-2821, E-mail: luca@math.duke.edu}

\author[Di Cerbo]{Luca Fabrizio Di Cerbo}

\title{On K\"ahler-Einstein surfaces with edge singularities}
\begin{document}
\maketitle

\begin{abstract}
In this paper we characterize logarithmic surfaces which admit
K\"ahler-Einstein metrics with negative scalar curvature and small
edge singularities along a normal crossing divisor.
\end{abstract}

\tableofcontents

\section{Introduction}
\pagenumbering{arabic}

In a celebrated paper \cite{Yau}, S.-T. Yau solved the Calabi
conjecture by studying complex Monge-Amp\`ere equations on compact
K\"ahler manifolds.

The solution of the Calabi conjecture provides a general existence
theorem for \emph{K\"ahler-Einstein} metrics of negative or zero
scalar curvature (in the negative case, independently, T. Aubin
\cite{Aubin}).

Since then, complex Monge-Amp\`ere equations have been extensively
studied. In particular, major developments in the theory of complex
Monge-Amp\`ere equations with singular right hand side have
occurred, see for example \cite{Bedford}, \cite{Kolo}, \cite{Pali}.
As in the non-degenerate case, these analytical results provide very
general existence theorems for \emph{singular} K\"ahler-Einstein
metrics. For more details, results and the connection with the
theory of the minimal model see also \cite{Guedj} and the
bibliography therein.

Such existence theorems, although extremely general, provide little
control on the asymptotic behavior of the singular K\"ahler-Einstein
metric near the \emph{degeneracy} locus. This fact somehow limits
many of the geometric applications.

In \cite{Tian}, G. Tian proposed to look for K\"ahler-Einstein
metrics with \emph{cone} singularities along a normal crossing
divisor. Such metrics should still arise as solutions of certain
singular complex Monge-Amp\`ere equations, but because of the mild
type of singularities developed by the associated K\"ahler-Einstein
potentials one should be able to derive many interesting geometric
consequences, see again \cite{Tian}.

In light of the recent advances and interest in the theory of
K\"ahler-Einstein metrics with edge singularities \cite{Donaldson},
\cite{Campana1}, \cite{Mazzeo}, \cite{LeBrun}, we study the geometry
of logarithmic surfaces which admit K\"ahler-Einstein metrics of
negative scalar curvature and edge singularities of small cone
angles along a normal crossing divisor.

The paper is organized as follows. In Section \ref{demailly}, we use
the theory of K\"ahler currents to derive some generalities
regarding the existence of K\"ahler-Einstein metrics with edge
singularities. In Section \ref{sakai}, we show how the theory of
\emph{semi-stable} curves on algebraic surfaces developed by F.
Sakai in \cite{Sakai} is relevant for the study of K\"ahler-Einstein
metrics with edges on algebraic surfaces. Section \ref{twisted}
contains a proof of the geometric semi-positivity of certain twisted
log-canonical bundles associated to a logarithmic surface. Finally,
in Section \ref{applications}, we show how the results in
\cite{Campana1} and \cite{Mazzeo} can be used to classify
logarithmic surfaces which admit negative K\"ahler-Einstein metrics
with \emph{small} edge singularities. Moreover, we briefly discuss
the Chern-Weil approach, through K\"ahler-Einstein metrics with
cone-edge singularities, to the logarithmic
\emph{Bogomolov-Miyaoka-Yau} inequality for surfaces of log-general
type. Remarkably, this is intimately connected with the recent work
of M. F. Atiyah and C. LeBrun in \cite{LeBrun}.

\section{K\"ahler-Einstein metrics with edge singularities as K\"ahler
currents}\label{demailly}

Let $\overline{M}$ be a $n$-dimensional projective manifold and $D$
a normal crossing divisor. Given the pair $(\overline{M}, D)$, let
us define the notion of a K\"ahler metric with edge singularities
along $D$.

Let $\{D_{i}\}$ be the irreducible components of $D$ and
$\{\alpha_{i}\}$ a collection of positive numbers less than one and
bigger than zero. A smooth K\"ahler metric $\hat{\omega}$ on
$\overline{M}\backslash D$ is said to have edge singularities of
cone angles $2\pi(1-\alpha_{i})$ along the $D_{i}$'s if for any
point $p\in D$ there exists a coordinate neighborhood
$(\Omega;z_{1},...,z_{n})$ where
\begin{align}\notag
D_{|_{\Omega}}=z_{1}\cdot...\cdot z_{k}=0
\end{align}
and a positive constant $C$ such that
\begin{align}\label{Jeffres-Tian}
C^{-1}\omega_{0}\leq\hat{\omega}\leq C\omega_{0}
\end{align}
where
\begin{align}\label{model}
\omega_{0}=\sqrt{-1}\bigg(\sum^{k}_{i=1}\frac{dz_{i}\wedge
d\overline{z}_{i}}{|z_{i}|^{2\alpha_{i}}}+\sum^{n}_{i=k+1}dz_{i}\wedge
d\overline{z}_{i}\bigg).
\end{align}
Summarizing, from the K\"ahler geometry point of view on
$\overline{M}\backslash D$ the \emph{smooth} form $\hat{\omega}$ is
simply an \emph{incomplete} K\"ahler metric with finite volume.
Because of the finite volume property, standard results in the
theory of currents \cite{Demailly2} imply that $\hat{\omega}$ can be
regarded as a strictly positive closed real $(1, 1)$-current on
$\overline{M}$. In other words, $\hat{\omega}$ is a \emph{K\"ahler
current} on $\overline{M}$ with \emph{singular support} $D$.

We are now ready to introduce the definition of a K\"ahler-Einstein
metric with edge singularities.

\begin{definition}[Tian \cite{Tian}]
A K\"ahler current $\hat{\omega}$ with edges of cone angles
$2\pi(1-\alpha_{i})$ along the $D_{i}$'s is called K\"ahler-Einstein
with curvature $\lambda$ if it satisfies the distributional equation
\begin{align}\label{Tian}
Ric_{\hat{\omega}}-\sum_{i}\alpha_{i}[D_{i}]=\lambda\hat{\omega}
\end{align}
where by $[D]$ we indicate the current of integration along $D$.
\end{definition}

In what follows, we will focus on the negatively curved case. Thus,
let $\hat{\omega}$ be a K\"ahler-Einstein metrics with edge
singularities as in \ref{Tian} with $\lambda=-1$. By the
Poincar\'e-Lelong formula \cite{Griffiths}, the current
$-Ric_{\hat{\omega}}+\sum_{i}\alpha_{i}[D_{i}]$ represents the
cohomology class of the $\setR$-divisor
$K_{\overline{M}}+\sum_{i}\alpha_{i}D_{i}$. Since by assumption
$\hat{\omega}$ satisfies \ref{Tian} with $\lambda=-1$, we have that
the cohomology class of the $\setR$-divisor
$K_{\overline{M}}+\sum_{i}\alpha_{i}D_{i}$ can be represented by a
K\"ahler current. By the structure of the \emph{pseudo-effective}
cone given in \cite{Demailly1}, we conclude that
$K_{\overline{M}}+\sum_{i}\alpha_{i}D_{i}$ is a \emph{big}
$\setR$-divisor.

Next, we want to show that
$K_{\overline{M}}+\sum_{i}\alpha_{i}D_{i}$ has to be an \emph{ample}
$\setR$-divisor. To this aim, recall that given a closed positive
$(1, 1)$-current $T$ in $\setC^{n}$ one can define the Lelong
numbers \cite{Griffiths}. More precisely, let
\begin{align}\notag
\nu(T, r, x)=\frac{1}{(\pi r^{2})^{n-1}}\int_{B(x,
r)}T\wedge\omega^{n-1}
\end{align}
where $\omega$ is the standard Euclidean K\"ahler form. The Lelong
number of $T$ at $x$ is then simply defined as
\begin{align}\notag
\nu(T, x):=\lim_{r\rightarrow 0}\nu(T, r, x).
\end{align}

The theory of positive currents, as developed by Lelong, Siu and
others see for example \cite{Demailly2}, ensures that the definition
given above makes sense and that furthermore it can be extended to
currents on K\"ahler manifolds.

We are now interested in the computation of Lelong numbers of a
K\"ahler current with edge singularities. For simplicity we treat
the complex surface case only. The general case is completely
analogous.

Let $D^{*}$ denote the smooth part of the divisor $D$. Because of
the \emph{quasi-isometric} condition given in \ref{Jeffres-Tian},
for any $x\in D^{*}$ the computation of the Lelong numbers of
$\hat{\omega}$ reduces to the evaluation of
\begin{align}\notag
\lim_{r\rightarrow
0}\int_{B(r)}\frac{1}{|z_{1}|^{2\alpha}}dz_{1}\wedge
d\overline{z}_{1}\wedge dz_{2}\wedge d\overline{z}_{2}.
\end{align}
Since
\begin{align}\notag
\int\frac{dr}{r^{2\alpha-1}}=\frac{1}{2(1-\alpha)}r^{2(1-\alpha)}
\end{align}
and $0<\alpha<1$, we conclude that $\nu(\hat{\omega}, x)=0$ for any
$x\in D^{*}$. Furthermore, the Lelong numbers are clearly zero
outside the singular support of $\hat{\omega}$.

We are now ready to show that $\hat{\omega}$ represents an ample
class. The idea is to apply the celebrated \emph{regularization}
result of Demailly \cite{Demailly1}.

For cohomological reasons, on a compact K\"ahler manifold, the
Lelong numbers of a given positive current are bounded from above.
Let us then choose a positive number $c$ such that
$\nu(\hat{\omega}, x)<c$ for any point $x\in\overline{M}$. Next,
observe that there exists a smooth positive $(1, 1)$-form $\gamma$
such that $\hat{\omega}\geq\gamma$. Because of the choice of the
number $c$, the regularizations $\{\hat{\omega}_{c, k}\}$ given in
the main theorem of \cite{Demailly1} are smooth over $\overline{M}$
and satisfy
\begin{align}\notag
\hat{\omega}_{c, k}\geq \gamma-\mu_{c, k}\overline{\omega}
\end{align}
where $\{\mu_{c, k}\}$ is decreasing sequence of continuous
functions converging pointwise to $\nu(\hat{\omega}, x)$, and
$\overline{\omega}$ is a given background K\"ahler metric. As a
result, given any irreducible curve $C$ on $\overline{M}$, we have
\begin{align}\notag
(\hat{\omega}, C)=(\hat{\omega}_{c, k},
C)=\int_{C^{*}}\hat{\omega}_{c,
k}\geq\int_{C^{*}}\gamma-\int_{C^{*}}\mu_{c, k}\overline{\omega}
\end{align}
where by $C^{*}$ we denote the smooth part of $C$. Letting $k$ go to
infinity and applying the dominated convergence theorem of Lebesgue,
we conclude that $\hat{\omega}$ represents a \emph{strictly} nef
class. By a Nakai-Moishezon type criterion for $\setR$-divisors due
to F. Campana and T. Peternell \cite{Campana}, we conclude that
$K_{\overline{M}}+\sum_{i}\alpha_{i}D_{i}$ is indeed an ample
$\setR$-divisor.

Let us summarize these observations into a proposition.

\begin{proposition}\label{luca}
Let $\hat{\omega}$ be a K\"ahler-Einstein metric with edge
singularities as in \ref{Tian} with $\lambda=-1$, then the
$\setR$-divisor $K_{\overline{M}}+\sum_{i}\alpha_{i}D_{i}$ is ample.
\end{proposition}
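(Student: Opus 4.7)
The plan is to show that the cohomology class of $K_{\overline{M}}+\sum_{i}\alpha_{i}D_{i}$ is represented by the Kähler current $\hat{\omega}$ itself and then upgrade from ``class of a Kähler current'' to ``ample'' via a Nakai–Moishezon type argument applied to Demailly regularizations. First I would invoke the Poincaré–Lelong formula to identify $-Ric_{\hat{\omega}}+\sum_{i}\alpha_{i}[D_{i}]$ as a closed positive $(1,1)$-current in the class of $K_{\overline{M}}+\sum_{i}\alpha_{i}D_{i}$; the Einstein equation \ref{Tian} with $\lambda=-1$ then shows that this class is in fact represented by $\hat{\omega}$, which is a Kähler current. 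By Demailly's structure theorem for the pseudo-effective cone, this already forces $K_{\overline{M}}+\sum_{i}\alpha_{i}D_{i}$ to be big.

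The core of the proof is then the vanishing of all Lelong numbers of $\hat{\omega}$. Away from $D$ this is trivial because $\hat{\omega}$ is smooth there. On the smooth part $D^{*}$, the quasi-isometric bound \ref{Jeffres-Tian} reduces the Lelong number computation to the model \ref{model}, and the integral $\int r^{1-2\alpha}\,dr$ with $0<\alpha<1$ makes $\nu(\hat{\omega},x)=0$ at every $x\in D^{*}$. At crossing points one performs the analogous estimate in the product model; this will be the main technical point one has to verify, since a priori each factor contributes an integrable singularity and one must check that the resulting Lelong number is still zero. In the surface case treated here, this is a direct iteration of the $D^{*}$ calculation.

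Once all Lelong numbers vanish, I would apply Demailly's regularization theorem to produce a sequence of smooth $(1,1)$-forms $\hat{\omega}_{c,k}$ in the class of $\hat{\omega}$ with $\hat{\omega}_{c,k}\geq \gamma-\mu_{c,k}\overline{\omega}$, where $\gamma$ is a smooth positive $(1,1)$-form with $\hat{\omega}\geq\gamma$ and $\mu_{c,k}\downarrow \nu(\hat{\omega},\cdot)\equiv 0$. For any irreducible curve $C\subset\overline{M}$, cohomological invariance gives $(\hat{\omega},C)=(\hat{\omega}_{c,k},C)$, and integrating over the smooth part $C^{*}$ yields
\begin{align*}
(\hat{\omega},C)\;\geq\;\int_{C^{*}}\gamma-\int_{C^{*}}\mu_{c,k}\overline{\omega}.
\end{align*}
Letting $k\to\infty$ and using dominated convergence (since $\mu_{c,k}\to 0$ pointwise and is dominated by a fixed continuous function) makes the correction term vanish, leaving $(\hat{\omega},C)\geq \int_{C^{*}}\gamma>0$.

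Thus $K_{\overline{M}}+\sum_{i}\alpha_{i}D_{i}$ is a big $\setR$-divisor whose class is strictly nef. Ampleness then follows from the Campana–Peternell Nakai–Moishezon criterion for $\setR$-divisors in \cite{Campana}. The main obstacle, as noted, is the Lelong number computation at the crossing points of $D$, together with the verification that $\int_{C^{*}}\mu_{c,k}\overline{\omega}\to 0$ uniformly enough to apply Lebesgue; everything else is a direct packaging of Demailly's regularization and the pseudo-effective cone structure.
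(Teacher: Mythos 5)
Your proposal follows essentially the same route as the paper's own argument: Poincar\'e--Lelong plus the Einstein equation to realize the class by the K\"ahler current $\hat{\omega}$ (hence big), the vanishing of all Lelong numbers via the model computation on $D^{*}$, Demailly regularization and dominated convergence to get strict nefness on every irreducible curve, and finally the Campana--Peternell Nakai--Moishezon criterion for $\setR$-divisors. The only difference is that you explicitly flag the Lelong-number check at the crossing points, which the paper dismisses as analogous; your reading of that step as an iteration of the $D^{*}$ calculation is correct.
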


An analogous proposition holds for K\"ahler-Einstein currents of
positive scalar curvature.

\begin{proposition}\label{fabrizio}
Let $\hat{\omega}$ be a K\"ahler-Einstein metric with edge
singularities as in \ref{Tian} with $\lambda=1$, then the
$\setR$-divisor $-(K_{\overline{M}}+\sum_{i}\alpha_{i}D_{i})$ is
ample.
\end{proposition}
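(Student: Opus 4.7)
The plan is to mirror the argument used for Proposition \ref{luca}, with the sign change being the only substantive modification. With $\lambda=1$, the distributional equation \ref{Tian} reads $Ric_{\hat{\omega}}-\sum_{i}\alpha_{i}[D_{i}]=\hat{\omega}$. By the Poincar\'e-Lelong formula, the current on the left hand side represents (in cohomology) the class of $-(K_{\overline{M}}+\sum_{i}\alpha_{i}D_{i})$. Thus $\hat{\omega}$ itself, a K\"ahler current, represents this class, so by the characterization of the pseudo-effective cone in \cite{Demailly1} we conclude that $-(K_{\overline{M}}+\sum_{i}\alpha_{i}D_{i})$ is a big $\setR$-divisor.

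Next I verify that $-(K_{\overline{M}}+\sum_{i}\alpha_{i}D_{i})$ is strictly nef. The Lelong number computation is identical to the one already performed: the local comparison \ref{Jeffres-Tian} with the model \ref{model} is independent of the sign of $\lambda$, so the estimate $\int dr/r^{2\alpha-1}=\frac{1}{2(1-\alpha)}r^{2(1-\alpha)}$ combined with $0<\alpha_{i}<1$ yields $\nu(\hat{\omega},x)=0$ at every smooth point of $D$; an analogous factored computation handles crossing points, and trivially $\nu(\hat{\omega},x)=0$ off $D$.

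With all Lelong numbers vanishing, I can choose any $c>0$ and apply Demailly's regularization theorem to produce smooth approximations $\hat{\omega}_{c,k}$ on $\overline{M}$ satisfying $\hat{\omega}_{c,k}\geq\gamma-\mu_{c,k}\overline{\omega}$, where $\gamma$ is a smooth positive form with $\hat{\omega}\geq\gamma$ on some neighborhood (possible since $\hat{\omega}$ is a genuine K\"ahler metric on $\overline{M}\setminus D$ and the cone singularities are quasi-isometric to the model), $\overline{\omega}$ is a background K\"ahler form, and $\mu_{c,k}\searrow \nu(\hat{\omega},\cdot)\equiv 0$. Pairing against an arbitrary irreducible curve $C$ and passing to the limit by dominated convergence gives $(\hat{\omega},C)\geq\int_{C^{*}}\gamma>0$, so the class is strictly nef on curves.

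Finally, combining bigness with strict positivity on all irreducible curves, the Nakai-Moishezon type criterion of Campana-Peternell \cite{Campana} for $\setR$-divisors yields ampleness of $-(K_{\overline{M}}+\sum_{i}\alpha_{i}D_{i})$. There is no genuine obstacle beyond the one already resolved in Proposition \ref{luca}; the argument is formally identical once the Poincar\'e-Lelong identification is corrected for the sign of $\lambda$.
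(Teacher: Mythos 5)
Your proposal is correct and is exactly the argument the paper intends: the paper gives no separate proof of Proposition \ref{fabrizio}, merely noting that it is "analogous" to Proposition \ref{luca}, and your write-up carries out that analogy faithfully (Poincar\'e--Lelong with the sign of $\lambda$ flipped so that $\hat{\omega}$ represents $-(K_{\overline{M}}+\sum_{i}\alpha_{i}D_{i})$, vanishing Lelong numbers, Demailly regularization to get strict nefness, and the Campana--Peternell criterion). No gaps.
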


For the zero scalar curvature case the cohomological restrictions
are obvious.

\section{On K\"ahler-Einstein surfaces with small edge singularities and negative scalar
curvature}\label{sakai}

Let $\overline{M}$ be a smooth projective surface. Let $D$ be a
reduced divisor having normal crossings on $\overline{M}$. The
\emph{logarithmic} canonical bundle associated to $D$ is then
defined as $\mathcal{L}=K_{\overline{M}}+D$, for details see
\cite{Iitaka}. Similarly, for any real number $\alpha\in(0,1]$ we
define $\mathcal{L}_{\alpha}=K_{\overline{M}}+\alpha D$. Let us
refer to the $\mathcal{L}_{\alpha}$'s as \emph{twisted}
log-canonical bundles.

In Section \ref{demailly}, we have shown that if a pair
$(\overline{M}, D)$ admits a K\"ahler-Einstein metric with edge
singularities of cone angle $2\pi(1-\alpha)$ along $D$ then the
$\setR$-divisor $\mathcal{L}_{\alpha}$ has to be ample, see
Proposition \ref{luca}. In this section we classify the pairs
$(\overline{M}, D)$ for which the corresponding
$\mathcal{L}_{\alpha}$'s are ample for all $\alpha$ close enough to
one. Furthermore, we give an analogous classification result under
the weaker requirement for $\mathcal{L}_{\alpha}$ to be big and nef
for all $\alpha$ close enough to one. This more general
classification result will be used in Section \ref{twisted} and in
the applications given in Section \ref{applications}. Let us start
with the following definitions, for details see \cite{Sakai}.

\begin{definition}
Let $\overline{M}$ be an algebraic surface and let $D$ be a reduced
normal crossing divisor. The pair $(\overline{M}, D)$ is said to be
$D$-minimal if there are no curves $E$ in $\overline{M}$ such that
\begin{align}\notag
E\simeq\setC P^{1}, \quad E^{2}=-1, \quad E\cdot D\leq 1.
\end{align}
\end{definition}

Observe that if $(\overline{M},D)$ is $D$-minimal then
$\overline{M}$ need not be minimal.

\begin{definition}
The divisor $D$ is called semi-stable if any  smooth rational
component in $D$ intersects the other components of $D$ at least in
two points.
\end{definition}

We can now prove the following lemma.

\begin{lemma}\label{lemma1}
If $\mathcal{L}_{\alpha}$ is big and nef for all values of $\alpha$
close enough to one, then the pair $(\overline{M},D)$ is $D$-minimal
of log-general type with $D$ a semi-stable curve.
\end{lemma}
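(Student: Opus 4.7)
The plan is to verify the three conclusions separately: log-general type, $D$-minimality, and semi-stability of $D$. Log-general type is essentially a tautology: for any $\alpha<1$ close enough to $1$ that $\mathcal{L}_{\alpha}$ is big, the identity $K_{\overline{M}}+D = \mathcal{L}_{\alpha} + (1-\alpha)D$ writes the log-canonical class as a big $\setR$-divisor plus an effective $\setR$-divisor, hence big.

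The remaining two conclusions will both be proved by contradiction, by exhibiting a curve against which $\mathcal{L}_{\alpha}$ has negative intersection for $\alpha$ sufficiently close to $1$. For $D$-minimality, suppose there were a $(-1)$-curve $E\simeq\setC P^{1}$ with $E\cdot D\le 1$; then
$$\mathcal{L}_{\alpha}\cdot E = K_{\overline{M}}\cdot E + \alpha(E\cdot D) = -1+\alpha(E\cdot D)\le -1+\alpha<0$$
for every $\alpha<1$, contradicting nefness. For semi-stability, suppose $E$ is a smooth rational component of $D$ meeting $D'=D-E$ at most once, so $E\cdot D'\le 1$; adjunction on the smooth rational curve $E$ gives $K_{\overline{M}}\cdot E=-2-E^{2}$, whence
$$\mathcal{L}_{\alpha}\cdot E = -2-(1-\alpha)E^{2}+\alpha(E\cdot D').$$
As $\alpha\to 1^{-}$ the right-hand side tends to $-2+E\cdot D'\le -1$, so $\mathcal{L}_{\alpha}\cdot E<0$ for all $\alpha$ sufficiently close to $1$, again contradicting nefness.

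The main subtle point I anticipate is the semi-stability step: one really does need the hypothesis that $\mathcal{L}_{\alpha}$ is nef for \emph{all} $\alpha$ close to $1$, not just for a single value, because the coefficient $-(1-\alpha)$ of $E^{2}$ can push the intersection number in either direction depending on the sign of $E^{2}$. Passing to the limit kills this error term and lets the fixed adjunction contribution $-2$ dominate; beyond this observation, everything reduces to classical intersection calculus and standard properties of big $\setR$-divisors.
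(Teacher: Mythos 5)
Your proof is correct, and its substance --- adjunction plus intersection computations against the offending curves --- is the same as the paper's; the difference lies in how the nefness hypothesis is exploited. The paper first passes to the limit $\mathcal{L}=K_{\overline{M}}+D$, invokes Kleiman's theorem (the nef cone is the closure of the ample cone, hence closed) to conclude that $\mathcal{L}$ is nef, and then reads semi-stability off the formula $\mathcal{L}\cdot E=2p_{a}(E)-2+E\cdot(D-E)\geq 0$; you instead compute $\mathcal{L}_{\alpha}\cdot E$ directly and let $\alpha\to 1^{-}$, which amounts to the same limit without naming Kleiman. Where your route genuinely pays off is $D$-minimality: nefness of the limit $\mathcal{L}$ alone does \emph{not} exclude a curve $E\simeq\setC P^{1}$ with $E^{2}=-1$ and $E\cdot D=1$, since then $\mathcal{L}\cdot E=0$; the paper simply asserts minimality at this point, whereas your inequality $\mathcal{L}_{\alpha}\cdot E=-1+\alpha(E\cdot D)\leq -1+\alpha<0$ uses the nefness of $\mathcal{L}_{\alpha}$ for $\alpha<1$ in an essential way and closes that gap. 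The bigness step (big plus effective is big, applied to $K_{\overline{M}}+D=\mathcal{L}_{\alpha}+(1-\alpha)D$) is identical in both arguments. One small point worth recording explicitly: semi-stability is defined via the \emph{number} of intersection points, while your argument bounds the intersection number $E\cdot(D-E)$; the two agree because $D$ has normal crossings, so all intersections among components are transverse.
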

\begin{proof}
Let us assume $\mathcal{L}_{\alpha}$ to be big and nef for all
values of $\alpha$ close to one. Let consider the limit
$\mathcal{L}_{\alpha}\rightarrow \mathcal{L}$ as $\alpha$ approaches
one. By Kleiman's theorem \cite{Lazarsfeld}, the closure of the
\emph{ample} cone is exactly the \emph{nef} cone. In particular, the
nef cone is closed and $\mathcal{L}$ is necessarily nef. Let $E$ be
an irreducible component of $D$, we then have
\begin{align}\notag
\mathcal{L}\cdot E=2p_{a}(E)-2+E(D-E)
\end{align}
and since by construction $\mathcal{L}$ is nef, we conclude that $D$
is \emph{semi-stable}. Moreover, the pair $(\overline{M}, D)$ is
$D$-minimal. Finally, it remains to show that $\mathcal{L}$ is a big
divisor. Since
\begin{align}\notag
\mathcal{L}=\mathcal{L}_{\alpha}+(1-\alpha)D
\end{align}
and by assumption $\mathcal{L}_{\alpha}$ is big, the result simply
follows from the structure of the algebraic pseudo-effective cone
\cite{Lazarsfeld}.
\end{proof}

Conversely, we can prove the following.

\begin{lemma}\label{lemma2}
Let $(\overline{M}, D)$ be a $D$-minimal semi-stable pair of
log-general type, then $\mathcal{L}_{\alpha}$ is big and nef for all
values of $\alpha$ close to one.
\end{lemma}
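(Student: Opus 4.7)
The plan is to verify nefness and bigness of $\mathcal{L}_\alpha = K_{\overline{M}} + \alpha D = \mathcal{L} - (1-\alpha) D$ for $\alpha$ sufficiently close to one by checking intersection numbers against every irreducible curve. Bigness is the easier half: the log-general type hypothesis means $\mathcal{L}$ is big, and since the big cone is open in $N^1(\overline{M})_{\setR}$ and $\mathcal{L}_\alpha \to \mathcal{L}$ as $\alpha \to 1$, bigness of $\mathcal{L}_\alpha$ follows for $\alpha$ close to one.

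For nefness I would split into two cases. If $C = D_i$ is a component of $D$, adjunction on a surface gives
\[
\mathcal{L}_\alpha \cdot D_i = 2 p_a(D_i) - 2 + (\alpha - 1) D_i^2 + \alpha (D - D_i)\cdot D_i.
\]
When $p_a(D_i) \geq 2$, or when $p_a(D_i) = 1$ and $D_i$ meets $D - D_i$, the genus contribution dominates and a small perturbation preserves the sign. The boundary cases require more care: for isolated elliptic components, the Hodge index theorem together with $\mathcal{L}^2 > 0$ forces $D_i^2 < 0$, so $(\alpha-1) D_i^2 \geq 0$; for smooth rational components, semi-stability gives $(D-D_i)\cdot D_i \geq 2$ and $D$-minimality rules out the configuration $D_i^2 = -1, (D-D_i)\cdot D_i = 2$, leaving only cases in which a direct computation yields $\mathcal{L}_\alpha \cdot D_i \geq 0$ for $\alpha$ close to one.

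For $C \not\subset D$ one has $D\cdot C \geq 0$ and $\mathcal{L}_\alpha \cdot C = K\cdot C + \alpha D\cdot C$. If $K\cdot C \geq 0$ there is nothing to prove. Otherwise the genus formula forces $C$ to be rational with $C^2 \geq -1$. The $(-1)$-curve case is handled by $D$-minimality, which forces $C \cdot D \geq 2$ and hence $\mathcal{L}_\alpha \cdot C \geq -1 + 2\alpha > 0$ for $\alpha$ near one. For the remaining rational curves, I would use that $\mathcal{L}$ is big: by the Hodge index theorem the set of irreducible curves with $\mathcal{L}\cdot C = 0$ consists of finitely many $(-2)$-curves disjoint from $D$, on which $\mathcal{L}_\alpha \cdot C = 0$ trivially, while on all other curves $\mathcal{L}\cdot C \geq 1$.

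The main obstacle is turning these pointwise inequalities into a single threshold $\alpha_0 < 1$ uniform in $C$. Since the family of $C \not\subset D$ with $K\cdot C < 0$ may be infinite, one must argue that the ratio $-K\cdot C / D\cdot C$ is bounded strictly below one. This uniform bound is where Sakai's classification of extremal rays on $D$-minimal semi-stable log-surfaces enters, through the Mori-type cone theorem for $K+D$ together with the fact that $\mathcal{L}$ is big and nef rather than merely nef. Once this uniform control is secured, one chooses $\alpha_0$ so that $1 - \alpha_0 < \mathcal{L}\cdot C / D\cdot C$ uniformly and concludes nefness of $\mathcal{L}_\alpha$ for all $\alpha \in [\alpha_0, 1]$.
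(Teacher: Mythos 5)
Your proposal is correct and for the most part follows the same intersection-theoretic skeleton as the paper: bigness via openness of the big cone, and nefness via adjunction, the Hodge index theorem, semi-stability and $D$-minimality applied to the curves on which $\mathcal{L}$ vanishes. Where you genuinely diverge is on the curves $C\not\subset D$ with $\mathcal{L}\cdot C>0$. The paper quotes that $\mathcal{L}$ is nef and then asserts that for such $C$ one can make $\alpha$ close enough to one so that $\mathcal{L}_{\alpha}\cdot C>0$; this is a pointwise statement over an a priori infinite family of curves, and no uniform threshold $\overline{\alpha}<1$ is produced (the Hodge-index bound $D\cdot C\leq \mathcal{L}\cdot C+2+(\mathcal{L}\cdot C)^{2}/\mathcal{L}^{2}$ does not by itself force $(1-\alpha)D\cdot C\leq\mathcal{L}\cdot C$ uniformly). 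You isolate exactly this point and close it with the cone theorem: writing $\mathcal{L}_{\alpha}=\alpha\mathcal{L}+(1-\alpha)K_{\overline{M}}$, it suffices to test against $\overline{NE}_{K\geq 0}$, where both terms are nonnegative, and against the $K_{\overline{M}}$-negative extremal rays, which satisfy $0<-K_{\overline{M}}\cdot C\leq 3$; among these, the rays with $\mathcal{L}\cdot C=0$ are $(-1)$-curves meeting $D$ once, excluded by $D$-minimality, so on every remaining ray $\mathcal{L}_{\alpha}\cdot C\geq 4\alpha-3>0$ for $\alpha>3/4$. (The relevant cone theorem is the one for $K_{\overline{M}}$, or for the klt pair $(\overline{M},\alpha D)$, rather than for $K_{\overline{M}}+D$, whose negative part is empty once $\mathcal{L}$ is nef; this is a minor imprecision in your sketch, not a gap.) You are also more careful than the paper on the boundary case of a rational component $E\subset D$ with $E\cdot(D-E)=2$: the displayed inequality $(\alpha-1)(2+E^{2})\geq 0$ in the paper silently assumes $E^{2}\leq -2$, and it is $D$-minimality, not the Hodge index theorem alone, that excludes $E^{2}=-1$ there. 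What your route buys is an actual uniform $\overline{\alpha}$, at the cost of importing the cone theorem as an extra input.
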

\begin{proof}

Since by assumption $\mathcal{L}$ is big and the set of big
$\setR$-divisors forms an open convex cone, $\mathcal{L}_{\alpha}$
is big for all values of $\alpha$ close enough to one, see Corollary
2.24. in \cite{Lazarsfeld}.

It remains to understand the \emph{nefness} properties of
$\mathcal{L}_{\alpha}$. Let $E$ be an irreducible divisor in
$\overline{M}$ and let us write
\begin{align}\label{dotto}
\mathcal{L}_{\alpha}\cdot E=\mathcal{L}\cdot E+(\alpha-1)D\cdot E.
\end{align}
Now, the semi-stability assumption on $D$ combined with the
$D$-minimality of $(\overline{M},D)$ implies that $\mathcal{L}$ is
nef. Thus, if $E$ is such that $\mathcal{L}\cdot E>0$, by making
$\alpha$ close enough to one we can always assume that
$\mathcal{L}_{\alpha}\cdot E>0$. The only curves we then need to
consider are the ones such that $\mathcal{L}\cdot E=0$. Thus, if $E$
is irreducible and such that $\mathcal{L}\cdot E=0$ we have the
following cases:
\begin{center}
\begin{itemize}
\item[-] $E\nsubseteq D$, $E\simeq\setC P^{1}$, $E^{2}=-2$;
\item[-] $E\subset D$, $E$ is an isolated component of $D$ with $p_{a}(E)=1$, or $E\simeq\setC P^{1}$ and
$E\cdot(D-E)=2$.
\end{itemize}
\end{center}
Let us study first the case when $E$ is an isolated component of
$D$. Observe that
\begin{align}\notag
\mathcal{L}\cdot E=2p_{a}(E)-2+D\cdot E-E^{2}
\end{align}
so that if $\mathcal{L}\cdot E=0$ and $p_{a}(E)=1$ then $D\cdot
E=E^{2}$. Since $\mathcal{L}^{2}>0$, by the Hodge index theorem we
conclude that $D\cdot E=E^{2}<0$. By using \ref{dotto}, we conclude
that $\mathcal{L}_{\alpha}$ dots positively with the isolated
components of $D$ with arithmetic genus equal to one.

Let now $E$ be a $(-2)$-curve not contained in the boundary divisor
$D$ such that $\mathcal{L}\cdot E=0$. Since in this case $D\cdot
E=0$ we have
\begin{align}\notag
\mathcal{L}_{\alpha}\cdot E=\mathcal{L}\cdot E+(\alpha-1)D\cdot E=0
\end{align}
constantly in $\alpha$.

Finally, let $E$ be a smooth rational component of $D$ such that
$E\cdot(D-E)=2$ and $\mathcal{L}\cdot E=0$. Again by the Hodge index
theorem we must have $E^{2}<0$. As a result
\begin{align}\notag
\mathcal{L}_{\alpha}\cdot E=(\alpha-1)D\cdot
E=(\alpha-1)(2+E^{2})\geq0
\end{align}
with equality iff $E$ is a $(-2)$-curve.
\end{proof}

Combining Lemma \ref{lemma1} and \ref{lemma2} we have then proved
the following theorem.

\begin{theorem}\label{luca1}
The $\setR$-divisor $\mathcal{L}_{\alpha}$ is big and nef for all
values of $\alpha$ close enough to one iff the pair
$(\overline{M},D)$ is $D$-minimal of log-general type with $D$ a
semi-stable curve.
\end{theorem}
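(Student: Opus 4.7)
The plan is to recognize that Theorem \ref{luca1} is simply the bi-implication already packaged by the two preceding lemmas, so the argument reduces to invoking each lemma once in the appropriate direction.

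For the forward implication, I would appeal directly to Lemma \ref{lemma1}: the hypothesis that $\mathcal{L}_{\alpha}$ is big and nef for all $\alpha$ sufficiently close to $1$ is exactly the input fed into that lemma, and it concludes $D$-minimality, semi-stability of $D$, and log-general type of $(\overline{M},D)$. The engine there, which I would not reprove, is that nefness of the limit $\mathcal{L}$ follows from Kleiman's theorem (closure of the ample cone equals the nef cone), the adjunction computation $\mathcal{L}\cdot E = 2p_a(E)-2 + E\cdot(D-E)$ on irreducible components of $D$ forces semi-stability and rules out the existence of $(-1)$-curves meeting $D$ in at most one point, and bigness of $\mathcal{L}$ follows from $\mathcal{L}=\mathcal{L}_{\alpha}+(1-\alpha)D$ together with the convexity of the pseudo-effective cone.

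For the converse implication, I would invoke Lemma \ref{lemma2}: starting from a $D$-minimal, semi-stable pair of log-general type, bigness of $\mathcal{L}_{\alpha}$ for $\alpha$ close to $1$ is immediate from openness of the big cone, while nefness is handled by the finite case analysis already performed there, namely over $(-2)$-curves outside $D$, isolated components of $D$ with arithmetic genus one, and smooth rational components of $D$ intersecting $D-E$ in exactly two points, with Hodge index estimates supplying the sign information needed to push nefness from $\mathcal{L}$ to $\mathcal{L}_{\alpha}$.

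There is essentially no obstacle here: the hypotheses in the theorem and in the two lemmas match verbatim, so the substantive work — the Hodge index estimates and the enumeration of $\mathcal{L}$-trivial irreducible curves — has already been dispatched. The only thing to verify is that the two lemmas together account for both directions of the iff, which they do by construction.
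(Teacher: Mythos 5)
Your proposal is correct and is exactly the paper's argument: the paper proves Theorem \ref{luca1} simply by combining Lemma \ref{lemma1} (forward direction) and Lemma \ref{lemma2} (converse), with all the substantive work --- Kleiman's theorem, the adjunction computation on components of $D$, and the Hodge index estimates --- already carried out in those two lemmas. Nothing further is needed.
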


Finally, we characterize when $\mathcal{L}_{\alpha}$ is ample for
all values of $\alpha$ close to one.

\begin{theorem}\label{luca2}
The $\setR$-divisor $\mathcal{L}_{\alpha}$ is ample for all values
of $\alpha$ close to one iff the pair $(\overline{M},D)$ is
$D$-minimal of log-general type without interior rational
$(-2)$-curves and $D$ is a semi-stable curve without $(-2)$-rational
curves intersecting the other components of $D$ in just two points.
\end{theorem}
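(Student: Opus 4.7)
The plan is to reduce the theorem to Theorem \ref{luca1} (which handles the big and nef range) combined with the Campana--Peternell Nakai--Moishezon criterion for $\setR$-divisors already invoked in Section \ref{demailly}. Since ampleness implies big and nef, the residual content beyond Theorem \ref{luca1} is precisely that the two families of $(-2)$-curves singled out in the statement are the only irreducible curves obstructing the upgrade from nef to ample, and this can be read off directly from the trichotomy at the end of the proof of Lemma \ref{lemma2}.

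For the forward implication, suppose $\mathcal{L}_{\alpha}$ is ample for all $\alpha$ close to one. Then in particular $\mathcal{L}_{\alpha}$ is big and nef, so Theorem \ref{luca1} yields that $(\overline{M}, D)$ is $D$-minimal of log-general type with $D$ semi-stable. An interior rational $(-2)$-curve $E$ would satisfy $D \cdot E = 0$ and $\mathcal{L} \cdot E = 0$, hence $\mathcal{L}_{\alpha} \cdot E = 0$ identically in $\alpha$, contradicting ampleness; a smooth rational $(-2)$-component $E \subset D$ with $E \cdot (D - E) = 2$ would give $\mathcal{L}_{\alpha} \cdot E = (\alpha - 1)(2 + E^{2}) = 0$, again contradicting ampleness. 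Both excluded families must therefore be absent.

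Conversely, assume the four conditions of the theorem. By Lemma \ref{lemma2}, $\mathcal{L}_{\alpha}$ is big and nef for $\alpha$ sufficiently close to one, so in particular $\mathcal{L}_{\alpha}^{2} > 0$. The case analysis inside the proof of Lemma \ref{lemma2} shows that the irreducible curves $E$ on which $\mathcal{L}_{\alpha}$ can vanish are exactly the two excluded families together with the isolated components $E \subset D$ of arithmetic genus one; the latter satisfy $\mathcal{L}_{\alpha} \cdot E = (\alpha - 1) E^{2} > 0$ for $\alpha < 1$ by the Hodge index argument recorded there. Hence $\mathcal{L}_{\alpha} \cdot E > 0$ for every irreducible $E$, and the Campana--Peternell Nakai--Moishezon criterion concludes that $\mathcal{L}_{\alpha}$ is ample.

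The delicate point, which I expect to be the main obstacle, is uniformity in $\alpha$: the identity $\mathcal{L}_{\alpha} \cdot E = \mathcal{L} \cdot E + (\alpha - 1) D \cdot E$ gives strict positivity on each individual $E$ only beyond an $E$-dependent threshold. This is absorbed by the fact that nefness of $\mathcal{L}_{\alpha}$ already holds on a fixed open interval by Lemma \ref{lemma2}, so strict positivity can fail only on null curves of $\mathcal{L}$; for a big and nef class these form a finite set, exhausted by the trichotomy in Lemma \ref{lemma2}. Removing the two obstructing families therefore leaves only finitely many strict inequalities to arrange, and they hold simultaneously on a common open interval of $\alpha$ by continuity.
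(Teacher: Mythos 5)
Your proof is correct and follows the same route as the paper: necessity via Lemma \ref{lemma1} together with the observation that the two excluded families of $(-2)$-curves would force $\mathcal{L}_{\alpha}\cdot E=0$ for all $\alpha$, and sufficiency via the intersection computations in the proof of Lemma \ref{lemma2} combined with the Campana--Peternell Nakai--Moishezon criterion of \cite{Campana}. You in fact supply more detail than the paper's two-line argument, notably the uniformity-in-$\alpha$ point (any curve on which an interior $\mathcal{L}_{\alpha}$ vanishes must be a null curve of $\mathcal{L}$, since $\mathcal{L}_{\alpha}$ is a convex combination of two nef classes, and these null curves are finite in number), which the paper leaves implicit.
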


\begin{proof}
Let us assume $\mathcal{L}_{\alpha}$ to be ample for all values of
$\alpha$ close to one. By Lemma \ref{lemma1} the pair
$(\overline{M},D)$ is minimal of log-general type and $D$
semi-stable. Finally, it is clear that the pair $(\overline{M},D)$
cannot contain rational $(-2)$-curves as above.

Conversely, if $(\overline{M},D)$ is as in statement, the ampleness
of $\mathcal{L}_{\alpha}$ follows from the computations in Lemma
\ref{lemma2} and the characterization of the amplitude for
$\setR$-divisors given in \cite{Campana}.
\end{proof}

\section{Geometric semi-positivity of twisted log-canonical
bundles}\label{twisted}

In Section \ref{sakai} we have shown that given a minimal
semi-stable log-general pair $(\overline{M}, D)$ there are
obstructions for the $\setR$-divisors $\mathcal{L}_{\alpha}$ to be
ample for all $\alpha$ close enough to one. Nevertheless one expects
good positivity properties for these $\setR$-divisors.

Recall the following definition, see for example \cite{Fujita}.

\begin{definition}
A line bundle $L$ is called geometrically semi-positive if
$c_{1}(L)$ can be represented by a smooth closed Hermitian $(1,
1)$-form which is everywhere positive semi-definite.
\end{definition}

In this section we want to show that, given a minimal semi-stable
log-general pair $(\overline{M}, D)$, the associated $\setR$-divisor
$\mathcal{L}_{\alpha}$ is geometrically semi-positive for all values
of $\alpha$ close enough to one. Moreover, we will precisely
describe the locus where these $\setR$-divisors fail to be ample.

The main tool used here will be a well-known theorem of Reider
\cite{Reider}. In fact, we apply this theorem to certain integer
multiplies of $\mathcal{L}_{\alpha}$ where the parameter $\alpha$ is
appropriately chosen to be rational and close enough to one.

Thus, let us start with $\setQ$-divisors of the form
\begin{align}\notag
\mathcal{L}_{\alpha_{n}}=K_{\overline{M}}+\frac{(n-2)}{n} D.
\end{align}
By clearing denominators we obtain
\begin{align}\label{adjoint}
n\mathcal{L}_{\alpha_{n}}=2K_{\overline{M}}+(n-2)\mathcal{L}=K_{\overline{M}}+(K_{\overline{M}}+(n-2)\mathcal{L}).
\end{align}

By construction the pair $(\overline{M}, D)$ is $D$-minimal of
log-general type and $D$ a semi-stable pair. The log-canonical
bundle $\mathcal{L}$ is then big and nef. Furthermore, for $n$ big
enough by Theorem \ref{luca1} the divisor
$\overline{\mathcal{L}}_{n}=K_{\overline{M}}+(n-2)\mathcal{L}$ is
big and nef.

Now, given a big and nef divisor $L$ on $\overline{M}$ the theorem
of Reider \cite{Reider} provides a powerful tool for the study of
the linear system $|K_{\overline{M}}+L|$. The idea is now to apply
this theorem to the linear system associated to the divisor given in
\ref{adjoint}.

By letting $n$ be big enough, we can always assume that
$\overline{\mathcal{L}}_{n}^{2}>4$. By the theorem of Reider, we
know that if $x\in\overline{M}$ is a base point of
$|K_{\overline{M}}+\overline{\mathcal{L}}_{n}|$ then there exists an
effective divisor $C$ such that $x\in C$ and
\begin{align}\label{Reider}
\overline{\mathcal{L}}_{n}\cdot C=0,\quad C^{2}=-1;\quad
\overline{\mathcal{L}}_{n}\cdot C=1,\quad C^{2}=0.
\end{align}
For $n$ big enough, the divisor $C$ in \ref{Reider} must satisfy
$\mathcal{L}\cdot C=0$. By the Hodge index theorem we then have
$C^{2}<0$. This rule out the second possibility in \ref{Reider}.
Regarding the remaining case, we argue as follows. Since
$\mathcal{L}\cdot C=0$, the divisor $C$ must be connected.
Furthermore, since $C^{2}=-1$ it is easy to see that such a divisor
must be reduced. But then we would have
\begin{align}\notag
K_{\overline{M}}\cdot C=0,\quad C^{2}=-1,
\end{align}
which contradicts the integrality of the \emph{arithmetic genus} of
$C$ \cite{Harthshorne}. Concluding, for $n$ big enough the linear
system $|n\mathcal{L}_{\alpha_{n}}|$ is base-point free. In other
words, the Kodaira map
\begin{align}\notag
i_{|K_{\overline{M}}+\overline{\mathcal{L}}_{n}|}:
\overline{M}\longrightarrow P^{N}
\end{align}
is everywhere defined for $n$ big enough. We then have that
$\mathcal{L}_{\alpha_{n}}$ can be represented by a smooth closed
Hermitian $(1, 1)$-form which is everywhere positive semi-definite.
Next, we want to understand the \emph{locus} where
$\mathcal{L}_{\alpha_{n}}$ fails to be a K\"ahler class. Again this
can be achieved by using Reider's theorem. More precisely, for $n$
big enough the only obstruction for the Kodaira map
$i_{|K_{\overline{M}}+\overline{\mathcal{L}}_{n}|}$ to be a local
diffeomorphism onto its image is given by the existence of effective
divisors $C$ such that
\begin{align}\notag
\mathcal{L}\cdot C=0,\quad K_{\overline{M}}\cdot C=0,\quad
C^{2}=\{-1, -2\}.
\end{align}
These divisors are now easily classified. In fact, by using the
reasoning given in the proof of Lemma \ref{lemma2}, we conclude that
that the only obstructions are given by the interior $(-2)$-curves
and the $(-2)$-curves in $D$ intersecting the other components of
$D$ in two points only.

\begin{proposition}\label{semi-positivity}
Let $(\overline{M}, D)$ be $D$-minimal of log-general type with $D$
a semi-stable curve. There exists $\overline{\alpha}\in (0, 1)$ such
that for any $\alpha\in[\overline{\alpha}, 1)$ then
$\mathcal{L}_{\alpha}$ can be represented by a smooth closed $(1,
1)$-form which is everywhere positive semi-definite and strictly
positive outside the interior $(-2)$-curves and the boundary
$(-2)$-curves intersecting the other components of $D$ in just two
points.
\end{proposition}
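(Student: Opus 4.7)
The plan is to apply Reider's theorem to a carefully chosen adjoint bundle at a sequence of rational values of $\alpha$ converging to $1$, and then to pass to every nearby $\alpha$ by convex combination. Setting $\alpha_{n}=(n-2)/n$ and clearing denominators gives
\begin{align}\notag
n\mathcal{L}_{\alpha_{n}}=K_{\overline{M}}+\overline{\mathcal{L}}_{n},\qquad \overline{\mathcal{L}}_{n}:=K_{\overline{M}}+(n-2)\mathcal{L}=(n-1)\mathcal{L}_{(n-2)/(n-1)}.
\end{align}
Since $(n-2)/(n-1)\to 1$, Theorem \ref{luca1} provides that $\overline{\mathcal{L}}_{n}$ is big and nef for all $n$ sufficiently large, with $\overline{\mathcal{L}}_{n}^{2}>4$.

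The central step is the base-point freeness of $|K_{\overline{M}}+\overline{\mathcal{L}}_{n}|$. A Reider base point $x$ would produce an effective divisor $C\ni x$ with either $\overline{\mathcal{L}}_{n}\cdot C=0$ and $C^{2}=-1$, or $\overline{\mathcal{L}}_{n}\cdot C=1$ and $C^{2}=0$. In both situations, nefness of $\mathcal{L}$ on the effective curve $C$ forces $\mathcal{L}\cdot C=0$ once $n$ is large, since otherwise $(n-2)\mathcal{L}\cdot C$ would overwhelm the bounded term $K_{\overline{M}}\cdot C$. The Hodge-index inequality $(\mathcal{L}\cdot C)^{2}\geq \mathcal{L}^{2}\cdot C^{2}$ then excludes $C^{2}\geq 0$, so the second Reider alternative is out. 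The first forces $K_{\overline{M}}\cdot C=0$ together with $C^{2}=-1$, whence adjunction gives $2p_{a}(C)-2=-1$, contradicting the integrality of the arithmetic genus. Therefore $|n\mathcal{L}_{\alpha_{n}}|$ is base-point free, and pulling back the Fubini-Study form along the Kodaira map yields a smooth closed semi-positive $(1,1)$-form representing $\mathcal{L}_{\alpha_{n}}$.

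To pinpoint the locus where this form fails to be strictly positive, I would invoke the jet-separation version of Reider for the same linear system. The obstructions to the Kodaira map being a local immersion reduce, after the same $\mathcal{L}\cdot C=0$, Hodge-index, and arithmetic-genus-parity arguments, to effective divisors $C$ with $\mathcal{L}\cdot C=K_{\overline{M}}\cdot C=0$ and $C^{2}=-2$. The classification carried out inside the proof of Lemma \ref{lemma2} then identifies $C$ as either an interior $(-2)$-curve or a smooth rational $(-2)$-component of $D$ meeting $D-C$ in exactly two points, which is exactly the exceptional locus claimed.

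Finally, since $\alpha_{n}\nearrow 1$, for any $\alpha$ sufficiently close to $1$ one may pick $n$ with $\alpha\in[\alpha_{n},\alpha_{n+1}]$ and write $\alpha=t\alpha_{n}+(1-t)\alpha_{n+1}$ for a unique $t\in[0,1]$. The identity
\begin{align}\notag
\mathcal{L}_{\alpha}=t\mathcal{L}_{\alpha_{n}}+(1-t)\mathcal{L}_{\alpha_{n+1}}
\end{align}
of $\setR$-divisor classes allows the corresponding convex combination of the two semi-positive representatives to serve as a smooth semi-positive form for $\mathcal{L}_{\alpha}$, strictly positive off the common bad locus, which is intrinsic to $(\overline{M},D)$ and independent of $n$. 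The main delicacy throughout is the arithmetic genus parity argument: it is precisely what evicts the offending $C^{2}=-1$ Reider curves, and thereby drives both base-point freeness and the sharpness of the identified exceptional locus.
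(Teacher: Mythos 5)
Your argument is correct and, for the heart of the proof, identical to the paper's: you form $n\mathcal{L}_{\alpha_{n}}=K_{\overline{M}}+\overline{\mathcal{L}}_{n}$ with $\overline{\mathcal{L}}_{n}=K_{\overline{M}}+(n-2)\mathcal{L}$ big and nef by Theorem \ref{luca1}, run Reider's theorem, use $\mathcal{L}\cdot C=0$ plus Hodge index to kill the $C^{2}\geq 0$ alternative, and use the parity of the arithmetic genus to kill the $K_{\overline{M}}\cdot C=0$, $C^{2}=-1$ curves, leaving exactly the interior $(-2)$-curves and the boundary $(-2)$-curves meeting $D-C$ in two points as the non-immersion locus. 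The only genuine divergence is the final interpolation: the paper invokes Sakai's Theorem 5.8 to get semi-ampleness of $\mathcal{L}$ itself and then writes $\mathcal{L}_{\alpha}=\beta_{1}\mathcal{L}_{\alpha_{n}}+\beta_{2}\mathcal{L}$, whereas you take a convex combination $\mathcal{L}_{\alpha}=t\mathcal{L}_{\alpha_{n}}+(1-t)\mathcal{L}_{\alpha_{n+1}}$ of two consecutive rational slices. Your version buys independence from the external semi-ampleness input (the Reider step already supplies both semi-positive forms), at the mild cost of having to observe that the exceptional locus produced by Reider is the same for every large $n$ so that the convex combination stays strictly positive off it; the paper's version needs only one value of $n$ but imports Sakai's result. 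Both are valid; you might still note explicitly, as the paper does, that the Reider curve $C$ with $C^{2}=-1$ is connected and reduced before quoting integrality of $p_{a}(C)$, though the parity contradiction from adjunction holds for any effective divisor in any case.
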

\begin{proof}
We have seen that, for $n$ big enough, there exists a representative
for $\mathcal{L}_{\alpha_{n}}$ which is everywhere positive
semi-definite and strictly positive outside the interior
$(-2)$-curves and the boundary $(-2)$-curves intersecting the other
components of $D$ in just two points. By Theorem 5.8. in
\cite{Sakai} we know that $\mathcal{L}$ is \emph{semi-ample}. We
then have that $\mathcal{L}$ can be represented by a positive
semi-definite smooth form. A simple computation now shows that, for
any $\alpha\in[\alpha_{n}, 1)$, there exist strictly positive real
numbers $\beta_{1}(\alpha)$ and $\beta_{2}(\alpha)$ such that
\begin{align}\notag
\mathcal{L}_{\alpha}=\beta_{1}\mathcal{L}_{\alpha_{n}}+\beta_{2}\mathcal{L}.
\end{align}
By letting $\overline{\alpha}$ be equal to $\alpha_{n}$, the proof
is then complete.
\end{proof}

\section{Applications}\label{applications}

In this section, we apply the recent analytical advances in the
theory of complex Monge-Amp\`ere equations with degenerate right
hand side \cite{Campana}, \cite{Mazzeo} to classify logarithmic
surfaces which admit K\"ahler-Einstein metrics with negative scalar
curvature and small edge singularities.

As in Section \ref{demailly}, let $D$ be a normal crossing divisor
and let $\{D_{i}\}$ be its irreducible components. For all $i$, let
denote by $L_{i}$ the line bundle associated to $D_{i}$ and let
$\sigma_{i}\in H^{0}(\overline{M},
\mathcal{O}_{\overline{M}}(L_{i}))$ be a defining section for
$D_{i}$. Finally, equip each of these line bundles with a Hermitian
metric $\{(L_{i}, \|\cdot\|)\}$.

Thus, if we are interested in constructing singular negative
K\"ahler-Einstein metrics on $\overline{M}\backslash D$ with
asymptotic behavior as in \ref{model}, given a K\"ahler class
$\omega$ on $\overline{M}$, it is natural to consider the following
singular complex Monge-Amp\`ere equation
\begin{align}\label{singular}
(\omega+\sqrt{-1}\partial\overline{\partial}\varphi)^{n}=e^{f+\varphi}\frac{\omega^{n}}{\prod_i\|\sigma_{i}\|^{2\alpha}}
\end{align}
whose right hand side is the volume form of an edge metric with cone
angle $2\pi(1-\alpha)$.

In fact, if we assume $\mathcal{L}_{\alpha}$ to be ample by choosing
$\omega\in[\mathcal{L}_{\alpha}]$ and $f$ such that
\begin{align}\label{lemmabar}
\sqrt{-1}(\partial\overline{\partial}\log{\omega^{n}}-\sum_{i}\alpha_{i}\partial\overline{\partial}\log{\|\sigma_{i}\|^{2}}
+\partial\overline{\partial}f)=\omega
\end{align}
if $\varphi$ is a solution of \ref{singular}, smooth outside $D$, it
is clear that $\omega_{\varphi}$ is a smooth K\"ahler-Einstein
metric with negative scalar curvature on $\overline{M}\backslash D$.

Equations of the type given in \ref{singular} where already studied
in the fundamental paper of S.-T. Yau \cite{Yau}. The approach
described in \cite{Yau} is through the study of non-singular
$\epsilon$-regularization of \ref{singular}. More precisely, one
tries to construct a solution of \ref{singular} by studying the
degeneration as $\epsilon\rightarrow 0$ of the solutions of
$\epsilon$-regularized equations of the form
\begin{align}\notag
(\omega+\sqrt{-1}\partial\overline{\partial}\varphi_{\epsilon})^{n}=
e^{f+\varphi_{\epsilon}}\frac{\omega^{n}}{\prod_i(\|\sigma_{i}\|^{2}+\epsilon^{2})^{\alpha}},
\end{align}
for more details see Section 8 in \cite{Yau}.

In \cite{Kolo}, \cite{Kolo1}, S. Ko\l odziej using techniques coming
from \emph{pluripotential} theory, developed a very general
existence, uniqueness and regularity theory for complex
Monge-Amp\`ere equations whose right hand side is a $L^{p}$-density
for some $p>1$. In particular, this very general theory can be
applied to solve equations like \ref{singular} for $\alpha\in(0,
1)$.

Finally, over the past year there has been a lot of progress towards
the completion of Tian's program \cite{Tian}. In fact R. Mazzeo, T.
Jeffres and Y. Rubinstein in \cite{Mazzeo}, building up on the work
of S. Donaldson \cite{Donaldson}, appear to have completed this
program in all dimensions, for all cone angles when $D$ is an
irreducible \emph{smooth} divisor. Mazzeo-Rubinstein announced the
resolution of this problem in the general case when $D$ has simple
normal crossings \cite{Mazzeo2}. These results make use of
Rubinstein's Ricci continuity method and Mazzeo's edge calculus
which in particular provides a fine asymptotic for the associated
K\"ahler-Einstein potentials, for more details see \cite{Mazzeo} and
the bibliography therein. Moreover, they have existence theorems
when $\mathcal{L}_{\alpha}\sim 0$ and $-\mathcal{L}_{\alpha}$ is
ample. For related results see also the works of R. Berman
\cite{Berman} and S. Brendle \cite{Brendle}. Interestingly, by using
an approach similar to the one suggested by S.-T. Yau in \cite{Yau},
F. Campana, H. Guenancia, M. P\u aun in \cite{Campana1} were able to
show the existence of a negatively curved K\"ahler-Einstein metric
with edges along a normal crossing divisor $D$ under the assumption
that $\mathcal{L}_{\alpha}$ is ample and $\alpha\in [\frac{1}{2},
1)$. We now use their main existence result, Theorem A in
\cite{Campana1} or alternatively Theorem 1.3. in \cite{Mazzeo2}, to
prove the following.

\begin{theorem}\label{Luca}
A logarithmic surface $(\overline{M}, D)$ admits negative
K\"ahler-Einstein metrics with edge singularities along $D$ and cone
angles $2\pi(1-\alpha)$ for all values of $\alpha$ close enough to
one iff $(\overline{M}, D)$ is $D$-minimal, log-general, $D$ is a
semi-stable curve and there are no interior $(-2)$-curves or
$(-2)$-curves in $D$ which intersects the other components of $D$ in
two points only.
\end{theorem}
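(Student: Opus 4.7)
The plan is to prove the two implications separately, assembling results established earlier in the paper together with the existence theorems in \cite{Campana1} and \cite{Mazzeo2}, which will be cited as black boxes.

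For the forward implication, suppose a negative K\"ahler-Einstein metric with edge singularities of cone angle $2\pi(1-\alpha)$ along $D$ exists for every $\alpha$ in some interval $[\alpha_{0},1)$. Each such metric satisfies equation \ref{Tian} with $\lambda=-1$, so Proposition \ref{luca} forces the $\setR$-divisor $\mathcal{L}_{\alpha}=K_{\overline{M}}+\alpha D$ to be ample for every $\alpha\in[\alpha_{0},1)$. Having ampleness of the whole family of twisted log-canonical bundles as $\alpha$ approaches one, Theorem \ref{luca2} immediately yields the claimed geometric restrictions: $(\overline{M},D)$ is $D$-minimal and of log-general type, $D$ is semi-stable, and neither interior $(-2)$-curves nor $(-2)$-components of $D$ meeting the remaining components of $D$ at exactly two points can occur.

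For the converse, start from a pair $(\overline{M}, D)$ satisfying the stated combinatorial hypotheses. Theorem \ref{luca2} then supplies an $\overline{\alpha}\in(0,1)$ such that $\mathcal{L}_{\alpha}$ is ample on $\overline{M}$ for every $\alpha\in[\overline{\alpha},1)$. Shrinking the interval if necessary, we may moreover assume $\overline{\alpha}\geq 1/2$. For each $\alpha$ in this range, the hypotheses of Theorem A in \cite{Campana1}, namely ampleness of $\mathcal{L}_{\alpha}$ together with $\alpha\in[1/2,1)$, are met, so that result produces a negative K\"ahler-Einstein metric on $\overline{M}\setminus D$ with edge singularities of cone angle $2\pi(1-\alpha)$ along $D$; alternatively one can invoke Theorem 1.3 of \cite{Mazzeo2} in its place.

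The main obstacle is almost entirely bookkeeping rather than mathematical: the geometric content has been absorbed into Proposition \ref{luca} and Theorem \ref{luca2}, while the analytic content is imported from \cite{Campana1} and \cite{Mazzeo2}. The only subtlety is to verify that the range of $\alpha$ handed back by Theorem \ref{luca2} is compatible with the $\alpha\in[1/2,1)$ restriction required by Theorem A in \cite{Campana1}, which is harmless since existence is only asserted for $\alpha$ sufficiently close to $1$.
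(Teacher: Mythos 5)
Your proposal is correct and follows essentially the same route as the paper: Proposition \ref{luca} plus Theorem \ref{luca2} for the forward implication, and Theorem \ref{luca2} combined with Theorem A of \cite{Campana1} (or Theorem 1.3 of \cite{Mazzeo2}) for the converse, including the observation that $\alpha$ close to one lets you assume $\alpha\in[\frac{1}{2},1)$. The only cosmetic difference is that the paper spells out the intermediate step of solving the singular Monge-Amp\`ere equation \ref{singular} before invoking the quasi-isometry statement, whereas you cite the existence theorem as a single black box.
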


\begin{proof}
By Proposition \ref{luca}, we know that if $(\overline{M}, D)$
admits a negative K\"ahler-Einstein metric with edge singularities
along $D$ with cone angle $2\pi(1-\alpha)$ then the associated
twisted log-canonical bundle $\mathcal{L}_{\alpha}$ has to be ample.
Thus, if $\mathcal{L}_{\alpha}$ is ample for all values of $\alpha$
close to one by Theorem \ref{luca2} we conclude that $(\overline{M},
D)$ is minimal, log-general, $D$ is a semi-stable curve and there
are no interior $(-2)$-curves or $(-2)$-curves in $D$ which
intersects the other components of $D$ in just two points.

Conversely, if $(\overline{M}, D)$ is as above by Theorem
\ref{luca2} we know that the $\mathcal{L}_{\alpha}$'s are ample for
all values of $\alpha$ close to one. Let us choose a K\"ahler class
$\omega$ in $[\mathcal{L}_{\alpha}]$ and $f$ as in \ref{lemmabar},
then by solving a singular complex Monge-Amp\`ere equation of the
form given in \ref{singular}, see for example \cite{Kolo},
\cite{Campana1}, we can construct a negative K\"ahler-Einstein
metric $\omega_{\varphi}$ on $\overline{M}\backslash D$. Now, since
we are working with values of $\alpha$ close to one we can assume
$\alpha\in[\frac{1}{2}, 1)$ and then applying Theorem A in
\cite{Campana1} or Theorem 1.3. in \cite{Mazzeo2} we conclude that
$\omega_{\varphi}$ is indeed \emph{quasi-isometric} to an edge
K\"ahler metric near $D$.
\end{proof}

We conclude this section by discussing the Bogomolov-Miyaoka-Yau
inequality for surfaces of log-general type. In \cite{TianY}, G.
Tian and S.-T. Yau were able to prove, among many other things, that
given a logarithmic surface $(\overline{M}, D)$ for which
$\mathcal{L}$ is big, nef and ample modulo $D$ then the inequality
\begin{align}\notag
c^{2}_{1}(\Omega^{1}_{\overline{M}}(\log{D}))\leq
3c_{2}(\Omega^{1}_{\overline{M}}(\log{D}))
\end{align}
holds. For the definition of the sheaf
$\Omega^{1}_{\overline{M}}(\log{D})$ and its basic properties we
refer to Chapter 3 in \cite{Griffiths}. As the reader can easily
verify, if $(\overline{M}, D)$ is log-general, $D$-minimal with $D$
a semi-stable curve and there are not interior $(-2)$-curves then
$\mathcal{L}$ is big, nef and ample modulo $D$. Following
\cite{Tian}, one may try to prove \ref{Yau} by deforming to zero the
cone angle of a family $\omega^{\alpha}_{\varphi}$ of negative
K\"ahler-Einstein metrics with edges singularities along $D$ and
apply a suitably modified Chern-Weil theory for this kind of
incomplete metrics. The theory developed by
Jeffres-Mazzeo-Rubinstein, see Theorem 2 in \cite{Mazzeo}, provides
a precise asymptotic for $\omega_{\varphi}$ near $D$. Remarkably,
this asymptotic behavior appears to be exactly what is needed in
order to develop a meaningful Chern-Weil theory in this context. In
fact, Atiyah and LeBrun in \cite{LeBrun} introduce the notion of
Riemannian \emph{edge-cone} metrics which are singular along
smoothly embedded codimension two submanifolds and derive the
analogues of the well-known \emph{Gauss-Bonnet} and \emph{signature}
formulas for closed $4$-manifolds. As the reader can easily verify,
if the cone angle is sufficiently small, the singular
K\"ahler-Einstein metrics constructed by Jeffres-Mazzeo-Rubinstein
have cone-edge singularities in the sense of Atiyah-LeBrun. Again
this follows from the deep analysis contained in Theorem 2. of
\cite{Mazzeo}. Thus, let us show that this approach does indeed work
when the boundary divisor $D$ is \emph{smooth}.

\begin{proposition}
Let $(\overline{M}, D)$ be $D$-minimal of log-general type without
interior $(-2)$-curves and let $D$ be a smooth semi-stable curve.
Then
\begin{align}\label{Yau}
c^{2}_{1}(\Omega^{1}_{\overline{M}}(\log{D}))\leq
3c_{2}(\Omega^{1}_{\overline{M}}(\log{D})).
\end{align}
\end{proposition}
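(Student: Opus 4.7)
The plan is to combine the existence of K\"ahler-Einstein edge metrics provided by Theorem \ref{Luca} with the edge-cone Chern-Weil identities of Atiyah-LeBrun \cite{LeBrun}, together with the pointwise Miyaoka-Yau inequality for K\"ahler-Einstein $4$-manifolds, and then to let the cone angle shrink to zero.

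First I would apply Theorem \ref{Luca}: under the present hypotheses, for every $\alpha$ sufficiently close to one we obtain a negative K\"ahler-Einstein edge metric $\omega_{\alpha}$ on $\overline{M}\setminus D$ with cone angle $2\pi(1-\alpha)$ along the smooth divisor $D$ and $\mathrm{Ric}(\omega_{\alpha})=-\omega_{\alpha}$. By Theorem 2 of \cite{Mazzeo} the polyhomogeneous asymptotic of $\omega_{\alpha}$ near $D$ is, for $\alpha$ close enough to one, precisely of the form required to apply the Atiyah-LeBrun renormalised Gauss-Bonnet and signature formulas. With $\beta=1-\alpha$ these give
\begin{align*}
\chi(\overline{M})-\alpha\,\chi(D) &=\frac{1}{8\pi^{2}}\int_{\overline{M}\setminus D}\Bigl(|W^{+}|^{2}+|W^{-}|^{2}+\tfrac{s^{2}}{24}-\tfrac{|r^{0}|^{2}}{2}\Bigr)\,dV_{\omega_{\alpha}},\\
\tau(\overline{M})-\tfrac{\alpha(2-\alpha)}{3}\,D\cdot D &=\frac{1}{12\pi^{2}}\int_{\overline{M}\setminus D}\bigl(|W^{+}|^{2}-|W^{-}|^{2}\bigr)\,dV_{\omega_{\alpha}}.
\end{align*}

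Next, on $\overline{M}\setminus D$ the metric $\omega_{\alpha}$ is genuinely K\"ahler-Einstein, so $r^{0}\equiv 0$ and $|W^{+}|^{2}=s^{2}/24$. Forming the combination of integrands that corresponds to $3c_{2}-c_{1}^{2}=3\chi-(2\chi+3\tau)$ produces the pointwise K\"ahler-Einstein non-negativity
\[
\frac{1}{8\pi^{2}}\Bigl(3|W^{-}|^{2}+\tfrac{s^{2}}{8}\Bigr)\,dV_{\omega_{\alpha}}\ge 0
\]
at every point of $\overline{M}\setminus D$. Integrating and combining with the two edge-cone formulas above, together with the elementary algebraic identity
\[
(K_{\overline{M}}+\alpha D)^{2}=2\bigl(\chi(\overline{M})-\alpha\chi(D)\bigr)+3\bigl(\tau(\overline{M})-\tfrac{\alpha(2-\alpha)}{3}D^{2}\bigr),
\]
which is a direct consequence of Noether's formula $K_{\overline{M}}^{2}=2\chi+3\tau$ and the adjunction formula $K_{\overline{M}}\cdot D+D^{2}=-\chi(D)$ for smooth $D$, I obtain
\[
(K_{\overline{M}}+\alpha D)^{2}\le 3\bigl(\chi(\overline{M})-\alpha\chi(D)\bigr)
\]
for every $\alpha$ sufficiently close to one. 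Both sides of this inequality are polynomials in $\alpha$, so passing to the limit $\alpha\to 1^{-}$ gives $(K_{\overline{M}}+D)^{2}\le 3\bigl(\chi(\overline{M})-\chi(D)\bigr)$. Since $D$ is smooth, the standard computation of logarithmic Chern numbers yields $c_{1}^{2}(\Omega^{1}_{\overline{M}}(\log D))=(K_{\overline{M}}+D)^{2}$ and $c_{2}(\Omega^{1}_{\overline{M}}(\log D))=\chi(\overline{M})-\chi(D)$, and \ref{Yau} follows.

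The main obstacle I anticipate is the analytic verification that the K\"ahler-Einstein edge metric produced by \cite{Campana1} or \cite{Mazzeo2} really falls within the Atiyah-LeBrun edge-cone class, with curvature integrals that are finite and that realise exactly the renormalised characteristic numbers written above; for smooth $D$ this rests on the polyhomogeneous expansion in Theorem 2 of \cite{Mazzeo}, but the precise interplay between the edge-calculus description of $\omega_{\alpha}$ and the Chern-Weil integrands is the technical heart of the argument, and is the reason why the statement is restricted to the case of a smooth boundary divisor.
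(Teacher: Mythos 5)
Your proposal follows essentially the same route as the paper's own proof: existence of the negatively curved K\"ahler--Einstein edge metrics for $\alpha$ near one (via Theorem \ref{luca2} and the Jeffres--Mazzeo--Rubinstein/Campana--Guenancia--P\u aun existence theory), the Atiyah--LeBrun edge-cone Gauss--Bonnet and signature formulas, the pointwise K\"ahler identity $|W_{+}|^{2}=s^{2}/24$ to get $\mathcal{L}_{\alpha}^{2}\leq 3(2\chi_{\alpha}-3\sigma_{\alpha})$, the limit $\alpha\to 1$, and the computation of the logarithmic Chern numbers from the residue exact sequence. The only differences are cosmetic (you carry the inequality in the form $(K_{\overline{M}}+\alpha D)^{2}\leq 3(\chi(\overline{M})-\alpha\chi(D))$ before passing to the limit), and your closing caveat about verifying that the edge metrics lie in the Atiyah--LeBrun class is exactly the point the paper also attributes to Theorem 2 of \cite{Mazzeo}.
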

\begin{proof}
By Theorem \ref{luca2}, if $(\overline{M},D)$ is as in the statement
then the twisted log-canonical bundle $\mathcal{L}_{\alpha}$ is
ample for all values of $\alpha$ close enough to one. By solving a
singular Monge-Amp\`ere equation as in \ref{singular}, see again
Theorem 2. in \cite{Mazzeo}, one can construct a family of singular
K\"ahler-Einstein metrics $\omega^{\alpha}_{\varphi}$ with negative
scalar curvature and cone angles $2\pi(1-\alpha)$ along $D$. For
$\alpha$ close enough to one, by Theorem 2.1. and Theorem 2.2. in
\cite{LeBrun} we know that
\begin{align}\notag
\chi(M,\omega^{\alpha}_{\varphi})=\chi(\overline{M})-\alpha\chi(D),
\quad
\sigma(M,\omega^{\alpha}_{\varphi})=\sigma(\overline{M})-\frac{1}{3}\alpha(2-\alpha)D^{2}
\end{align}
where $M=\overline{M}\backslash D$. For simplicity let us define
\begin{align}\notag
\chi_{\alpha}=\chi(M,\omega^{\alpha}_{\varphi}), \quad
\sigma_{\alpha}=\sigma(M,\omega^{\alpha}_{\varphi}).
\end{align}
Now, observe that
\begin{align}\notag
\mathcal{L}^{2}_{\alpha}=2\chi_{\alpha}+3\sigma_{\alpha}=
\frac{1}{4\pi^{2}}\int\Big(2|W_{+}|^{2}+\frac{s^{2}_{\omega^{\alpha}_{\varphi}}}{24}\Big)d\mu,
\end{align}
where $s$, $W_{+}$ and $W_{-}$ are the scalar curvature, the
self-dual and anti-self-dual Weyl curvatures of
$\omega^{\alpha}_{\varphi}$. Since $\omega^{\alpha}_{\varphi}$ is a
smooth K\"ahler metric on $\overline{M}\backslash D$, a \emph{local}
computation shows the pointwise equality
\begin{align}\notag
|W_{+}|^{2}=\frac{s^{2}}{24}
\end{align}
which therefore implies
\begin{align}\notag
\mathcal{L}^{2}_{\alpha}\leq3\Big(\frac{1}{4\pi^{2}}\int\frac{s^{2}}{24}d\mu\Big)
\leq\Big(\frac{1}{4\pi^{2}}\int2|W_{-}|^{2}+\frac{s^{2}}{24}d\mu\Big)=3(2\chi_{\alpha}-3\sigma_{\alpha}).
\end{align}
By letting $\alpha$ approach one, we conclude that
\begin{align}\notag
\chi(\overline{M})-\chi(D)\geq3(\sigma(\overline{M})-\frac{1}{3}D^{2}).
\end{align}
Moreover, by using the fact that $(\overline{M}, D)$ is a
logarithmic surface we have that
\begin{align}\notag
K^{2}_{\overline{M}}=2\chi(\overline{M})+3\sigma(\overline{M}),
\quad \chi(D)=-K_{\overline{M}}\cdot D-D^{2},
\end{align}
which implies
\begin{align}\label{Miyaoka}
3(\chi(\overline{M})-\chi(D))\geq \mathcal{L}^{2}.
\end{align}
The final step is to show the equivalence of \ref{Yau} and
\ref{Miyaoka}. First, let us consider the short exact sequence of
sheaves
\begin{align}\notag
0\longrightarrow\Omega^{1}_{\overline{M}}\longrightarrow\Omega^{1}_{\overline{M}}(\log{D}))
\longrightarrow\mathcal{O}_{D}\longrightarrow 0;
\end{align}
for more background see again Chapter 3 in \cite{Griffiths}. By the
Whitney product formula we have
\begin{align}\notag
c(\Omega^{1}_{\overline{M}}(\log{D}))=c(\Omega^{1}_{\overline{M}})\cdot
c(\mathcal{O}_{D}).
\end{align}
Since $D$ is reduced and effective, in order to compute
$c(\mathcal{O}_{D})$ we can simply apply the Whitney product formula
to the standard short exact sequence
\begin{align}\notag
0\longrightarrow\mathcal{O}_{\overline{M}}(-D)\longrightarrow\mathcal{O}_{\overline{M}}
\longrightarrow\mathcal{O}_{D}\longrightarrow 0.
\end{align}
In conclusion, we obtain
\begin{align}\notag
c(\Omega^{1}_{\overline{M}}(\log{D}))=(1+c_{1}(\Omega^{1}_{\overline{M}})+c_{2}(\Omega^{1}_{\overline{M}}))\cdot(1+D+D^{2})
\end{align}
which implies
\begin{align}\notag
c_{1}(\Omega^{1}_{\overline{M}}(\log{D}))=K_{\overline{M}}+D=\mathcal{L}
\end{align}
and with a slight abuse of notation
\begin{align}\notag
c_{2}(\Omega^{1}_{\overline{M}}(\log{D}))=c_{2}(\Omega^{1}_{\overline{M}})+K_{\overline{M}}\cdot
D+D^{2}=\chi(\overline{M})-\chi(D).
\end{align}
This concludes the proof of \ref{Yau} when the boundary divisor $D$
is smooth.
\end{proof}

It would be extremely interesting to extend this argument in the
case when $D$ is reduced with normal crossing. The first problem is
that, as shown in Theorem \ref{Luca}, not all pairs $(\overline{M},
D)$ for which $\mathcal{L}$ is big, nef and ample modulo $D$ admit a
$1$-parameter family of negative K\"ahler-Einstein metrics with
small edge singularities along $D$. Nevertheless, combining
Proposition \ref{semi-positivity} with Theorem 6.1. in \cite{Pali},
one can still construct a $1$-parameter family of negative
K\"ahler-Einstein metrics $\gamma^{\alpha}_{\varphi}$ on
$\overline{M}\backslash D$. More precisely, given a logarithmic
surface  $(\overline{M},D)$ as in Proposition \ref{semi-positivity}
and without interior $(-2)$-curves, for any
$\alpha\in[\overline{\alpha},1)$ let $\gamma_{\alpha}$ be a smooth
semi-positive representant for the $\setR$-cohomology class
$[\mathcal{L}_{\alpha}]$. This smooth form is \emph{strictly}
positive outside the boundary $(-2)$-curves intersecting the other
components of $D$ in two points only. Now, let $\Omega$ be a smooth
volume form on $\overline{M}$ and consider the family of degenerate
complex Monge-Amp\`ere equations
\begin{align}\label{degenerate}
(\gamma_{\alpha}+\sqrt{-1}\partial\overline{\partial}\varphi)^{n}=
e^{f+\varphi}\frac{\Omega}{\prod_i\|\sigma_{i}\|^{2\alpha}}
\end{align}
for any $\alpha\in[\overline{\alpha},1)$. By Theorem 6.1. in
\cite{Pali}, for a fixed $\alpha$ this degenerate equation admits a
unique solution $\varphi\in L^{\infty}(\overline{M})$ which is
smooth on $\overline{M}\backslash D$. Moreover, by appropriately
choosing the function $f$ in \ref{degenerate} we can arrange
$\gamma^{\alpha}_{\varphi}$ to be Einstein with negative scalar
curvature on $\overline{M}\backslash D$. Thus, the remaining problem
is to give a \emph{topological} interpretation of the curvature
integrals
\begin{align}\notag
\chi_{\alpha}=\chi(M,\gamma^{\alpha}_{\varphi}), \quad
\sigma_{\alpha}=\sigma(M,\gamma^{\alpha}_{\varphi}).
\end{align}
Conjecturally, the same elegant formulas given in Theorem 2.1. and
Theorem 2.2. of \cite{LeBrun} hold. The proof of \ref{Yau} should
then follow as in the smooth boundary divisor case.

Concluding, a Chern-Weil approach to \ref{Yau}, through deformations
of negative K\"ahler-Einstein metrics with singularities along $D$,
has to rely on a generalization of the recent theory of M.F. Atiyah
and C.LeBrun \cite{LeBrun} outside the realm of Riemannian metrics with pure cone-edge asymptotic.\\

\noindent\textbf{Acknowledgements}. I would like to thank Samuel
Grushevsky, Thalia Jeffres, Blaine Lawson, Claude LeBrun and Mark
Stern for useful discussions.

%\address{Mathematics Department, Duke University, Box 90320, Durham, NC 27708,
%USA}

%\emph{E-mail address}: \email{luca@math.duke.edu}

\end{document}